\newtheorem{thm}{Theorem}[section]
\newtheorem{lem}[thm]{Lemma}
\newtheorem{prop}[thm]{Proposition}
\newtheorem{defin}[thm]{Definition}
\newtheorem{rem}[thm]{Remark}
\newcommand{\GN}{\mathbb{N}}
\DeclareMathOperator{\alf}{Alph}
\DeclareMathOperator{\Ff}{Fact}
\DeclareMathOperator{\suc}{succ}
\newcommand{\llex}{<_{\mathrm{lex}}}
\title{Some characterizations of Sturmian words in terms of the lexicographic order}
\author{Michelangelo Bucci\thanks{Partially supported by the FiDiPro grant ``Words, Numbers, Tilings and Applications'' from the Academy of Finland}\\
Department of Mathematics,
University of Turku\\
FI-20014 Turku, Finland\\
michelangelo.bucci{@}utu.fi
\and
Alessandro De Luca\thanksas{1}\\
Dipartimento di Scienze Fisiche\\
Universit\`a degli Studi di Napoli Federico II\\
via Cintia, Monte S.~Angelo\\
I-80126 Napoli, Italy\\[1ex]
Department of Mathematics, University of Turku\\
FI-20014 Turku, Finland\\
alessandro.deluca{@}utu.fi
\and
Luca Q.~Zamboni\thanksas{1}\thanks{Partially supported by ANR grant \textsl{SUBTILE}}\\
Universit\'e de Lyon, 
Universit\'e Lyon 1\\ 
CNRS UMR 5208 Institut Camille Jordan\\ 
B\^atiment du Doyen Jean Braconnier\\ 
43, blvd du 11 novembre 1918\\ 
F-69622 Villeurbanne Cedex, France\\[1ex]
Department of Mathematics, University of Turku\\
FI-20014 Turku, Finland\\
luca.zamboni{@}utu.fi}
\begin{document}

\maketitle

\runninghead{M.~Bucci, A.~De Luca, L.~Q.~Zamboni}{Some characterizations of Sturmian words in terms of the lex order}
\begin{abstract}
In this paper we present three new characterizations of Sturmian words based on the lexicographic ordering of their factors. 
\end{abstract}
\begin{keywords}
Sturmian words, lexicographic order
\end{keywords}

\section{Introduction}
Let $w \in A^{\omega} $ be an infinite word with values in a finite alphabet
$A$.  The {\it (block) complexity function} $p_{w }:\GN \rightarrow \GN$ assigns
to each $n$ the number of distinct factors of $w $ of length $n$. A fundamental result due to Hedlund
and Morse \cite{MoHe1} states that a word $w $ is ultimately periodic if and only if for some $n$ the complexity $p_{w }(n)\leq n$.  Sequences of complexity $p(n)=n+1$ are called 
{\it Sturmian words.} The most studied Sturmian word is the so-called Fibonacci word 
\[01001010010010100101001001010010\ldots\]
fixed by the morphism $0\mapsto 01$ and $1\mapsto 0$. In \cite {MoHe2} Hedlund and Morse showed that each Sturmian word may be realized geometrically by an irrational rotation on the circle. More precisely, every Sturmian word is obtained by coding the symbolic orbit of a point $x$ on the circle (of circumference one) under a rotation by an irrational angle $\alpha $ where the circle is partitioned
into two complementary intervals, one of length $\alpha $ and the other of length $1-\alpha $.
And conversely each such coding gives rise to a Sturmian word. The irrational $\alpha $ is called the {\it slope} of the Sturmian word. An alternative characterization using continued fractions was given by Rauzy in \cite{Ra1} and \cite{Ra2}, and later by Arnoux and Rauzy
in \cite {ArRa}. Sturmian words admit various other types of characterizations of geometric and combinatorial nature (see for instance \cite{BeSe}).
For example they are characterized by the following balance property: A word $w $
is Sturmian if and only if $w $ is a binary aperiodic (non-ultimately periodic) word with the property that
for any two factors $u$ and $v$ of $w $ of equal length, we have $-1\leq |u|_i-|v|_i\leq 1$ for each letter $i$. Here $|u|_i$ denotes the number of occurrences of $i$ in $u$.
 In this paper, we establish some new characterizations of Sturmian words in terms of  the lexicographic order behavior of its factors. We prove:
 
\begin{thm}
\label{theo1}
An infinite word $w$ containing the letters $0$ and $1$ is Sturmian if and only if for every pair of lexicographically consecutive factors $v,v'$ of the same length,   there exist $\lambda, \mu$ such that $v,v'$ either both belong to $\{\lambda 01 \mu, \lambda 10 \mu\}$ or both belong to  $\{ \lambda 0, \lambda 1\}$.
\end{thm}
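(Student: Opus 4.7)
The plan is to prove both directions of the equivalence separately. For the forward direction, assume $w$ is Sturmian and let $v<v'$ be lex-consecutive factors of the same length $n$. Let $\lambda$ be their longest common prefix, so $v=\lambda 0u$ and $v'=\lambda 1 u'$ with $|u|=|u'|$. If $u=\varepsilon$ we land in the second alternative of the theorem; otherwise, both $\lambda 0$ and $\lambda 1$ are factors, so $\lambda$ is right-special, and since Sturmian words have exactly one right-special factor per length, $\lambda$ is this unique factor. The main step is to show $v=\lambda 01\mu$ and $v'=\lambda 10\mu$ for a common suffix $\mu$. Observe first that $v$ is the lex-largest factor of length $n$ beginning $\lambda 0$ and $v'$ the lex-smallest beginning $\lambda 1$ (else a factor would lie strictly between them). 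Using the balance property that characterizes Sturmian words, I would show both $\lambda 01$ and $\lambda 10$ are factors: if $\lambda 01$ were not a factor, then $\lambda 0$ would extend only to $\lambda 00$, balance would force $\lambda 11$ also not to be a factor, and iterating this along further right-extensions together with the uniqueness of right-specials at each length yields a contradiction. Once $\lambda 01,\lambda 10$ are known to be factors, the maximality/minimality forces $v$ to begin $\lambda 01$ and $v'$ to begin $\lambda 10$, and the common suffix $\mu$ follows because any divergence after the swapped position would introduce a second right-special factor at some higher length.

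For the converse, suppose $w$ satisfies the stated condition. I would show that $w$ is aperiodic and admits at most one right-special factor at every length; combined with the hypothesis that both $0$ and $1$ appear in $w$, this yields $p_w(n)=n+1$ for all $n$, so $w$ is Sturmian. Suppose, for contradiction, two distinct factors $\alpha,\beta$ of some length $n$ were both right-special. Testing the lex-consecutive pairs among the four extensions $\alpha 0,\alpha 1,\beta 0,\beta 1$ against the two prescribed forms and eliminating obvious last-letter conflicts forces the configuration $\alpha=\lambda 0$, $\beta=\lambda 1$ for some common $\lambda$, so all four of $\lambda 00,\lambda 01,\lambda 10,\lambda 11$ are factors. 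Extending one length further, the natural length-$(|\lambda|+3)$ extensions produce a lex-consecutive pair of shape $(\lambda 010,\lambda 101)$ that fits neither prescribed form, yielding the contradiction. Aperiodicity is handled similarly: an ultimately periodic $w$ has eventually constant factor complexity, producing lex-consecutive pairs at long lengths whose shape cannot conform to the hypothesis.

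The main obstacle is the technical combinatorial argument in the forward direction establishing that both $\lambda 01$ and $\lambda 10$ must be factors and that $v,v'$ share the suffix past the swap; this step delicately combines the balance property with the uniqueness of right-special factors across consecutive lengths, and may be cleanest if presented via the closure of the Sturmian factor set under reversal. The converse's case analysis---chasing the two prescribed forms through lex-consecutive pairs until a non-conforming pair such as $(\lambda 010,\lambda 101)$ is produced---is more mechanical, but still requires careful bookkeeping of which further extensions of $\lambda ab$ are forced.
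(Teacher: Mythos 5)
Your converse direction contains the decisive gap. You claim that if $\alpha\neq\beta$ are right-special factors of the same length, then inspecting the lexicographically consecutive pairs among $\alpha 0,\alpha 1,\beta 0,\beta 1$ forces $\alpha=\lambda 0$ and $\beta=\lambda 1$, and that one further extension then produces a forbidden pair such as $(\lambda 010,\lambda 101)$. Neither deduction goes through. For the first: take $\alpha=00$, $\beta=10$ with length-$3$ factor set $\{000,001,010,100,101\}$; the consecutive pairs are $(000,001)$ and $(100,101)$ (second form, with $\lambda=00$, resp.\ $\lambda=10$) and $(001,010)$, $(010,100)$ (first form, with $\lambda=0,\mu=\varepsilon$, resp.\ $\lambda=\varepsilon,\mu=0$). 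Every pair conforms, yet $\alpha$ and $\beta$ differ in their \emph{first} letter, so no common $\lambda$ with $\alpha=\lambda 0$, $\beta=\lambda 1$ exists. For the second: even when $\lambda 00,\lambda 01,\lambda 10,\lambda 11$ are all factors, the pair $(\lambda 010,\lambda 101)$ is consecutive only if $\lambda 011$ and $\lambda 100$ are \emph{both} absent, whereas the hypothesis applied to the successor of the largest extension of $\lambda 01$ merely forces exactly one of them to be present, yielding the conforming pair $(\lambda 011,\lambda 101)$ or $(\lambda 010,\lambda 100)$. So the contradiction is not obtainable by a bounded local analysis of extensions; the obstruction is global. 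This is precisely why the paper's proof of this direction runs through Proposition~\ref{thm:nonsturm} and Lemma~\ref{thm:contradiction} (an imbalanced word with the property must have every prefix extremal), Lemma~\ref{thm:sturmextension} (the defect propagates to every tail), and an infinite iteration over tails ending in the contradiction $v'=(10)^\omega$ with aperiodicity (Lemma~\ref{thm:nonperiodic}).

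The forward direction is closer to viable but the two steps you flag as ``the main obstacle'' are the entire content, and as sketched they do not close. That a right-special $\lambda$ admits both $\lambda 01$ and $\lambda 10$ as factors does not follow from ``iterating right-extensions with uniqueness of right-specials'': right-special factors of a Sturmian word grow by \emph{left} extension, so neither $\lambda 0$ nor $\lambda 1$ need be right-special, and your iteration has no evident terminating contradiction. The common-suffix claim also splits into two cases: if $v=\lambda 01\nu a\cdots$ and $v'=\lambda 10\nu b\cdots$ first diverge with $a=1$, $b=0$, then $1\nu 1$ and $0\nu 0$ give an imbalanced pair and you are done, but if $a=0$, $b=1$ you must instead exhibit a factor strictly between $v$ and $v'$, which requires showing $\lambda 01\nu 1$ (or a suitable variant) is a factor---not a consequence of ``a second right-special factor would appear.'' The paper avoids all of this by a different decomposition: every pair of consecutive factors extends to a pair of consecutive conjugates $y01x$, $y10x$ of a Christoffel factor $0u1$ (Proposition~\ref{thm:christoffel}), and consecutiveness of those conjugates follows from a one-line balance argument (Proposition~\ref{thm:firstDirection}). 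If you wish to keep your right-special approach, you would need to supply complete proofs of both flagged steps; as written, both directions have genuine holes.
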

Actually our first main result is later formulated in more general terms.
The fact that this property holds for Sturmian words has recently been shown in~\cite{PeRe}, and is a direct consequence of a result proved in \cite{bib:ZamJen}. 

Our second characterization requires the additional hypothesis of recurrence:

\begin{thm}\label{theo2}
Let $w$ be a recurrent aperiodic binary word over the alphabet $\{ 0, 1 \}$ and $v, v' \in \Ff(w)$. Then the following are equivalent:
\begin{enumerate}
\item $w$ is Sturmian.
\item For all factors $v,v'$ of $w$ of equal length, if $v \llex v'$ then $|v|_{1} \leq |v'|_{1}$.
\item For any pair of lexicographically consecutive factors $v,v'$ of the same length, $v$ and $v'$ differ in at most two positions. 
\end{enumerate}
\end{thm}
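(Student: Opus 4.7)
I would establish the equivalences by proving the cycle $(1)\Rightarrow(3)\Rightarrow(2)\Rightarrow(1)$. The implication $(1)\Rightarrow(3)$ is immediate from Theorem~\ref{theo1}: the two admissible configurations for lex-consecutive factors of a Sturmian word, namely $\{\lambda 01\mu,\lambda 10\mu\}$ and $\{\lambda 0,\lambda 1\}$, both differ in at most two positions.

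For $(3)\Rightarrow(2)$, by chaining lex-consecutive pairs it suffices to verify $|v|_1\leq|v'|_1$ when $v\llex v'$ are themselves lex-consecutive. Writing $v=\lambda 0\alpha$ and $v'=\lambda 1\beta$ with $\lambda$ their longest common prefix, hypothesis $(3)$ forces $\alpha$ and $\beta$ to differ in at most one position, so $|\alpha|_1-|\beta|_1\in\{-1,0,1\}$ and hence $|v|_1-|v'|_1 = (|\alpha|_1-|\beta|_1) - 1 \leq 0$.

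The substantive step is $(2)\Rightarrow(1)$. The plan is to reduce it to the classical fact that a recurrent aperiodic binary word is Sturmian if and only if it is balanced, so the goal becomes: condition $(2)$ implies $w$ is balanced. Arguing by contradiction, if $w$ is unbalanced then by a standard lemma there exists a word $p$ with $0p0,\,1p1\in\Ff(w)$; moreover, recurrence (via a sliding-window argument on long factors containing occurrences of both) shows that for every $n$ the set of 1-counts attained by length-$n$ factors is an interval, so at some length $n$ the values $k,k+1,k+2$ are all realized. The idea is then to sort the length-$n$ factors lexicographically and, exploiting the structure imposed by $0p0,1p1$ together with recurrence, to produce a lex-consecutive pair $(u,u')$ with $|u|_1>|u'|_1$, contradicting $(2)$.

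The main obstacle is precisely this last construction. Condition $(2)$ is strictly weaker than balance at any single length---for example, the length-$2$ factors of the Thue--Morse word satisfy $(2)$ despite exhibiting unbalance---so the violating pair cannot in general be found at the minimal length at which unbalance first appears. Identifying at which larger length, and from which exact factors, the inversion of 1-count arises, using only recurrence and the lex ordering of factors around $0p0$ and $1p1$, is where the real work lies.
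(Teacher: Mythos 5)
Your implications $(1)\Rightarrow(3)$ and $(3)\Rightarrow(2)$ are correct and essentially the paper's own argument (the paper also chains lexicographically consecutive pairs and does the same $1$-count bookkeeping). The problem is $(2)\Rightarrow(1)$, where you have correctly located the difficulty but left it open, and you are also making the step harder than it needs to be: condition $(2)$ quantifies over \emph{all} pairs $v\llex v'$ of equal length, not only lexicographically consecutive ones, so there is no need to sort the length-$n$ factors, to show that the set of attained $1$-counts is an interval, or to manufacture a \emph{consecutive} inverting pair. A single pair $u\llex u'$ with $|u|_1>|u'|_1$ already contradicts $(2)$.

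The missing idea is a one-step left extension of the minimal imbalanced pair, and this is exactly what recurrence buys. The paper's Proposition~\ref{thm:nonsturm} shows that if $w$ is imbalanced and $u$ is of minimal length with $0u0,1u1\in\Ff(w)$, then either $10u0$ and $01u1$ are \emph{both} factors of $w$, or else some $xux$ is a prefix of $w$ occurring only finitely many times; the second alternative is impossible for a recurrent word, since every factor of a recurrent word occurs infinitely often. Hence $01u1$ and $10u0$ are both factors, and since $01u1\llex 10u0$ while $|01u1|_1=|10u0|_1+1$, condition $(2)$ fails --- done. Your Thue--Morse remark is exactly the right warning that $0p0,1p1\in\Ff(w)$ alone cannot contradict $(2)$ (indeed $0p0\llex 1p1$ with the $1$-counts in the compatible order), but the fix is not a finer analysis at larger lengths; it is the extension to $01p1$ and $10p0$, whose substance is the dichotomy of Proposition~\ref{thm:nonsturm}, proved via the uniqueness of the shortest imbalanced pair. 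Without that (or an equivalent lemma), your sketch of $(2)\Rightarrow(1)$ does not go through.
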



\section{Preliminaries}\label{sec:prel}

In this section, we introduce the tools which will be used in the rest of the paper. 

\subsection{Standard notions in combinatorics on words}
We will report here the standard notations and notions in combinatorics on words that will be used in the rest of the paper. For further results on the subject we refer the reader to  \cite{bib:lot}.

By an \emph{alphabet} we mean a finite non empty set $A$. The elements of $A$ are called \emph{letters}. We let $A^{*}$ denote the free monoid over $A$, i.e.~the set of finite sequences of elements of $A$ equipped with the concatenation product. The neutral element of $A^{*}$ will be called the \emph{empty word} and is denoted $\varepsilon$. The set of nonempty words over $A$, i.e.~the free semigroup over $A$, is denoted $A^{+}$. With the multiplicative notation, given a positive integer $n$ and a word $w$, we let $w^{n}$ denote the concatenation of $n$ copies of $w$. For each word $w$, we put $w^{0}= \varepsilon$.

Two words $v, v'$ are said to be \emph{conjugates} one of the other if there exist $\lambda, \mu$ such that $v= \lambda \mu$ and $v'= \mu \lambda$.

If a nonempty word $x$ is such that $x=x_{1}x_{2}\cdots x_{k}$, with $x_{i}\in A$ for $1\leq i\leq k$, then $k$ is called the  \emph{length} of $x$ and is denoted $|x|$. The length of the empty word is taken to be $0$. 

We say that a word $v$ is a \emph{factor} of another word $w$ if there exist two words $\lambda, \mu$ such that $w=\lambda v \mu$. If $\lambda= \varepsilon$ (resp. $\mu=\varepsilon$) we call $v$ a \emph{prefix} (resp.~a \emph{suffix}) of $w$. If $v$ is both a prefix and a suffix of $w$, we say that $v$ is a \emph{border}. A factor $v$ of $w$ is called \emph{proper} if $|v| < |w|$. We denote with $\Ff(w)$ the set of all factors of the word $w$. A word $w$ is said to be \emph{unbordered} if the only borders of $w$ are $w$ and $\varepsilon$.

Most of the above definitions can be extended to the set $A^{\omega}$ of infinite words on the alphabet $A$. For $w, w'\in A^{\omega}$, we say $w'$ is a \emph{tail} of $w$ if  $w=vw'$ for some $v\in A^*$. If $v$ is not empty, we call $w'$ a \emph{proper tail} of $w$.

We call an \emph{occurrence} of $v$ in $w$ a word $\lambda$ such that $\lambda v$ is a prefix of $w$. An infinite word $w$ is said to be \emph{recurrent} if each of its factors (or, equivalently, of its prefixes) has infinitely many occurrences in $w$.
Given $v,w\in A^*$ we let $|w|_{v}$ denote the number of occurrences of $v$ in $w$ and set 
\[\alf(w) = \{ x \in A \, | \, |w|_{x}> 0 \}.\]
A factor $v$ of $w$ is \emph{unioccurrent} if $|w|_{v}=1$, i.e., if $v$ occurs in $w$ exactly once.

We say that an infinite word $w$ is \emph{periodic} if it can be expressed as an infinite concatenation of a finite word $v$, i.e. $w=v^{\omega}$
. We say that an infinite word is \emph{ultimately periodic} 
if it has a periodic 
tail. Otherwise we say $w$ is \emph{aperiodic}.  It is easy to show that any infinite word that contains itself as a proper tail is periodic.

\subsection{Lexicographic order}

Let $A$ be an alphabet equipped with a total order $<. $ Then $<$ extends naturally to a partial order on $A^{*}$, denoted  $\llex$, in the following way: We write $v \llex v'$ (and say  $v$ is \emph{lexicographically} smaller than $v',)$   if $|v|=|v'|$ and there exists a word $\lambda$ and two letters $a < b$ such that $\lambda a$ is a prefix of $v$ and $\lambda b$ is a prefix of $v'$. Two words $v,v'$ are said to be lexicographically \emph{consecutive} or \emph{adjacent} if $v \llex v'$ and there is no word $w$ such that $v \llex w \llex v'$.

We say a factor $v$ of a word $w$ is \emph{maximal} (resp. \emph{minimal}) in $w$ if there exists no factor $v'$ such that $v \llex v'$ (resp. $v' \llex v$), thus omitting the sentence ``with respect to the lexicographic order''. We will say that $v$ is \emph{extremal} in $w$  if it is either minimal or maximal.

Given two factors $v,v'$ of a word $w$ such that $v \llex v'$, we will write \( v' = \suc_{w}(v) \) if there is no $f \in \Ff(w)$ such that $v \llex f \llex v'$.  Notice that if $v \in \Ff(w)$ is non extremal, then there exist $f_{1}, f_{2} \in \Ff(w)$ such that $f_{1} = \suc_{w}(v)$ and $v = \suc_{w}(f_{2})$.

\begin{rem}\label{rem:extremalpref}
It is easy to show that if $v$ is a unioccurrent prefix of an infinite word $w$ and $v$ is extremal in $w$, then every prefix of $w$ longer than $v$ is unioccurrent, and extremal of the same kind.
\end{rem}

We can extend the definition of lexicographic order to infinite words in a natural way, saying that the infinite word $w$ is lexicographically smaller than $w'$ if $w$ has a prefix which is lexicographically smaller than a prefix (of the same length) of $w'$. The notion of extremality extends as well: we say that an infinite word $w$ is minimal (resp.~maximal) if it is lexicographically smaller (resp.~larger) than all its tails. 

\begin{rem}\label{rem:extremaltail}
It is clear that if $aw$ and $w$ are both extremal infinite binary words (and $a$ is a letter), then they are extremal of the same kind (i.e.~they are both minimal or maximal).
\end{rem}

\subsection{Sturmian words}
Let $v$ and $v'$ be factors of $w$ with $|v|=|v'|$. We say the pair $(v,v')$ is \emph{balanced} if $\left | |v|_{x}-|v'|_{x} \right|\leq 1$ for each letter $x\in A$. Otherwise the pair $(v,v')$ is said to be \emph{imbalanced}. A word $w$ is called balanced if all pairs of factors of $w$ of the same length are balanced.

A binary word $w$ is called \emph{Sturmian} if $w$ is aperiodic and balanced. As mentioned earlier, Sturmian words are also defined in terms of the block complexity function $p_{w }:\GN \rightarrow \GN$ which assigns
to each $n$ the number of distinct factors of $w $ of length $n$: $w$ is Sturmian if and only if $p_w(n)=n+1$ for each $n\geq 0$.  

For each Sturmian word $w\in \{0,1\}^\omega$ we set
\[ \Omega_{w} = \{w'\in \{0,1\}^\omega \, | \, \Ff(w') = \Ff(w) \}. \]
Thus  $ \Omega_{w}$ is the \emph{shift orbit closure} or \emph{subshift} generated by $w$.  
The proof of the following proposition is in  \cite{BeSe}.

\begin{prop} \label{thm:sturm}
Let $w$ be a Sturmian word over the alphabet $\{0,1\}$. Then
 there exists a unique word $\gamma $ in  $\Omega_{w}$ such that both $0\gamma$ and $ 1\gamma$ are in  $\Omega_{w}$.
\end{prop}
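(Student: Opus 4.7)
The plan is to construct $\gamma$ as the unique infinite word whose prefixes are precisely the \emph{left-special} factors of $w$, i.e., the factors $u \in \Ff(w)$ such that both $0u$ and $1u$ lie in $\Ff(w)$.

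The key step, and the main obstacle, is to show via the complexity function that $w$ has exactly one left-special factor of each length $n$. For each $u \in \Ff(w)$ of length $n$, let $\ell(u) \in \{0,1,2\}$ be the number of $a \in \{0,1\}$ with $au \in \Ff(w)$. Since every factor of length $n+1$ is uniquely of the form $au$ with $u \in \Ff(w)$, $|u|=n$, one has
\[ p_w(n+1) \;=\; \sum_{u \in \Ff(w),\,|u|=n} \ell(u). \]
Sturmian words are (uniformly) recurrent, so every factor has at least one left extension, giving $\ell(u) \geq 1$. Subtracting $p_w(n) = n+1$ (the number of summands) from $p_w(n+1) = n+2$ yields $\sum (\ell(u)-1) = 1$, forcing exactly one factor $\gamma_n$ of length $n$ to satisfy $\ell(\gamma_n) = 2$, i.e.~to be left-special.

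Next I would observe that left-specialness is inherited by prefixes: if $0u, 1u \in \Ff(w)$ and $u'$ is a prefix of $u$, then $0u', 1u'$ are prefixes of $0u, 1u$ and hence in $\Ff(w)$. Applied to $\gamma_{n+1}$, its length-$n$ prefix is left-special and by uniqueness equals $\gamma_n$. The $\gamma_n$ therefore cohere into an infinite word $\gamma$ having $\gamma_n$ as its length-$n$ prefix. Every factor of $\gamma$, and every prefix of $0\gamma$ or $1\gamma$, lies in $\Ff(w)$ by construction. Using that $w$ is uniformly recurrent (so that every factor of $w$ occurs in every sufficiently long factor of $w$, and hence any infinite word with factors in $\Ff(w)$ has factor set equal to $\Ff(w)$), we conclude $\gamma, 0\gamma, 1\gamma \in \Omega_w$.

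For uniqueness, suppose $\gamma' \in \Omega_w$ satisfies $0\gamma', 1\gamma' \in \Omega_w$. Every prefix $u$ of $\gamma'$ satisfies $0u, 1u \in \Ff(w)$ (as prefixes of $0\gamma'$ and $1\gamma'$), so $u$ is left-special; by uniqueness of the left-special factor of length $|u|$, $u = \gamma_{|u|}$, which is also the length-$|u|$ prefix of $\gamma$. Hence $\gamma' = \gamma$. The substantive content is concentrated in the counting argument; the chain construction and uniqueness are then bookkeeping.
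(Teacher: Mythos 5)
Your proof is correct; the paper itself gives no proof of this proposition but defers to Berstel--S\'e\'ebold, and your argument---counting left extensions against the complexity function to get a unique left-special factor of each length, then letting these cohere into $\gamma$---is precisely the standard proof found there. The only imported facts, that Sturmian words are uniformly recurrent (so every factor is left-extendable and the subshift is minimal, which you correctly identify as what is needed to pass from ``all factors of $0\gamma$ lie in $\Ff(w)$'' to $0\gamma\in\Omega_{w}$), are classical and legitimately assumed.
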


\begin{rem}\label{rem:sturm}
The word $\gamma$ in Proposition \ref{thm:sturm} is called the \emph{characteristic} word of $w$ and it is known that the prefixes of $0\gamma$ are lexicographically minimal among the factors of $w$, while the prefixes of $1\gamma$ are maximal.
\end{rem}

We say that a factor $v$ of a Sturmian word $w$ is a \emph{Christoffel word} if $v$ is unbordered. We group into the next statement the well-known properties of Christoffel words that we will need in the rest of the paper (see for instance~\cite{MigZam,blrs}, \cite[Prop.~5]{CaoWen}, \cite[Prop.~6]{bib:ZamJen}). 

\begin{prop} \label{thm:christoffel}
Let $w$ be a Sturmian word over the alphabet $\{ 0,1 \}$ and let $v \in \Ff(w)$ be a Christoffel word such that $|v|>1$. Then there exists $u$ such that:
\begin{enumerate}
\item $v$ is either $0u1$ or $1u0$, and they are both Christoffel words in $w$;
\item $0u1$ and $1u0$ are the only Christoffel words of length $|v|$ in $w$ and are conjugates;
\item all conjugates of $v$ are in $\Ff(w)$;
\item exactly one between $0u0$ and $1u1$ is a factor of $w$ and is extremal in $w$;
\item the factors of $w$ of length $|v|$ are either conjugates of $v$ or of type $xux$. 
\end{enumerate}

\end{prop}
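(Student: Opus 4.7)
My plan is to derive the five parts from the interplay of balance, the complexity formula $p_w(n)=n+1$, and the characteristic word of Proposition~\ref{thm:sturm}. Part~(1) is essentially immediate: an unbordered word of length at least $2$ cannot have equal first and last letters, for otherwise that common letter is itself a border of length one. So $v \in \{0u1, 1u0\}$ for some (possibly empty) $u$; assume without loss of generality $v = 0u1$.

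The core of the proof is a counting argument at length $n = |v|$. Being unbordered, $v$ is primitive and has $n$ pairwise distinct cyclic conjugates, all with the same number $k$ of $1$s by balance. The first major step is~(3): every rotation of $v$ is a factor of $w$. This rests on the minimality of the Sturmian subshift $\Omega_w$, via which any bi-infinite extension of one conjugate of a primitive factor forces all the other conjugates to occur elsewhere in $w$. Given~(3), the identity $p_w(n)=n+1$ leaves exactly one additional factor $f$ of length $n$; by balance, $f$ has $k-1$ or $k+1$ ones. Matching $f$ consistently into the Rauzy graph at level $n-1$, which in a Sturmian word has a unique right-special and a unique left-special vertex, pins down $f = xux$ for some letter $x$, giving~(5). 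By balance again, only one of $\{0u0,1u1\}$ can be a factor, so exactly one is; this is half of~(4).

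The extremality in~(4) comes from Remark~\ref{rem:sturm}: the prefixes of $0\gamma$ and $1\gamma$ are respectively the minimal and maximal factors of $w$. The prefix of length $n$ of $x\gamma$ belongs to the minority balance class of factors of length $n$, which has a single element, so it must coincide with $xux$; hence $xux$ is extremal of the corresponding kind. For~(2), it remains to show that among the $n$ rotations of $v$ only $0u1$ and $1u0$ are unbordered. Rotations that start and end with the same letter are automatically bordered; the work is to rule out the remaining rotations whose first and last letters already differ. This uses the palindromic decomposition $v = 0P1$ with $P$ a palindrome, a hallmark of Christoffel words, together with a direct check that any such additional rotation acquires a nontrivial longer border.

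The main obstacle is the two structural inputs just indicated, namely that all conjugates of $v$ lie in $\Ff(w)$, and that no rotation of $v$ other than $0u1$ and $1u0$ is unbordered. Neither is a formal consequence of balance together with $p_w(n) = n+1$; both exploit the specific combinatorial geometry of Sturmian words and Christoffel factors, which is why the paper elects to cite the literature for this proposition rather than reprove it in full.
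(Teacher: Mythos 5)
The paper does not prove Proposition~\ref{thm:christoffel} at all: it is stated as a collection of ``well-known properties'' with pointers to \cite{MigZam,blrs}, \cite[Prop.~5]{CaoWen} and \cite[Prop.~6]{bib:ZamJen}. So there is no in-paper argument to compare yours against, and your sketch has to stand on its own. It does not, for one concrete reason: your claimed mechanism for part~(3) is false. You assert that minimality of $\Omega_w$ forces all conjugates of a \emph{primitive} factor to occur in $w$. Take the Fibonacci word: $101$ is a primitive factor, the subshift is minimal, yet the conjugates $011$ and $110$ are not factors (the Fibonacci word contains no $11$). Recurrence of a primitive block gives you nothing about its rotations unless consecutive occurrences abut, and that is exactly where unborderedness must enter; your sketch never uses it for this step. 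Since (3) is the input to your counting argument for (5), (4) and (2), the gap propagates through the whole proof.

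Two further points are asserted rather than proved. First, the extremality half of~(4): you claim the length-$n$ prefix of $x\gamma$ lies in the ``minority balance class,'' but that the lexicographically extremal factor has the extremal number of $1$'s is not available here (it is essentially item~(3) of Theorem~\ref{theo2}, proved later \emph{from} this proposition). A correct route is to note that $u$ is left-special, hence a prefix of $\gamma$, and that whichever of $0u0$, $1u1$ is a factor must then be the length-$n$ prefix of $0\gamma$ resp.\ $1\gamma$, which is extremal by Remark~\ref{rem:sturm}. Second, for~(2) you invoke the palindromic form $v=0P1$ as a ``hallmark of Christoffel words,'' but in this paper a Christoffel word is \emph{defined} as an unbordered factor, so the palindromicity of $u$ is itself one of the nontrivial facts being swept into the citation. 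You candidly flag these as unproved structural inputs, which is honest, but it means the proposal is a plan that defers precisely the hard parts --- and the one mechanism you do commit to, minimality plus primitivity for~(3), is the one that breaks.
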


Those factors of a Sturmian word having the same length as a Christoffel word, but  not conjugate to a Christoffel word (i.e.~the factors $xux$ in the preceding proposition), are called \emph{singular words} of the Sturmian word.

\section{Main Result}\label{sec:main}
We begin with the following key proposition: 

\begin{prop}\label{thm:nonsturm}
Let $w\in \{0,1\}^\omega$ be an imbalanced word. Then there exists a factor $u \in \Ff(w)$ of minimal length such that $0u0, 1u1$ are in $\Ff(w)$.
Furthermore, either $10u0$ and $01u1$ are both factors of $w$ or there exists a unique letter $x$ such that $xux$ is a prefix of $w$ and occurs in $w$ only finitely many times. In the latter case every prefix of $w$  is extremal in $w$.
\end{prop}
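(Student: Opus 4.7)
The plan is to establish the three claims in turn. For the existence of $u$, I would invoke the standard argument that an imbalanced binary infinite word admits factors of the form $0v0$ and $1v1$ sharing a common middle $v$: take a pair of equal-length factors witnessing the imbalance (of minimal length), and trim from the left and right until the pair has the form $(0v0, 1v1)$ for some $v$ (cf.~\cite{bib:lot}). Choosing $u$ minimal among such $v$ gives the first assertion.

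For the dichotomy, if both $10u0$ and $01u1$ are in $\Ff(w)$, the first alternative holds. Otherwise, by symmetry assume $10u0 \notin \Ff(w)$. The crucial observation is that every occurrence of $0u0$ in $w$ at a position $i \geq 1$ must be preceded by $0$, since $10u0 \notin \Ff(w)$. I plan to deduce from this, using the minimality of $|u|$, that $0u0$ must occur as a prefix of $w$ and only finitely often. If $0u0$ were not a prefix, iterating the ``preceded by $0$'' constraint would build up left-extensions $0^{k} u 0 \in \Ff(w)$ for arbitrarily large $k$; showing that this cascade cannot continue indefinitely without producing a shorter witness $v$ with $0v0, 1v1 \in \Ff(w)$ (contradicting minimality of $|u|$) is the heart of the argument. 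Similarly, infinitely many occurrences of $0u0$, combined with the same constraint, would force arbitrarily long $0$-runs in $w$ structured around $u$, again yielding a contradiction via minimality (or forcing $w$ to be $0^{\omega}$-like, which would be balanced). The uniqueness of $x = 0$ is immediate because $0u0$ and $1u1$ start with different letters, so at most one is a prefix of $w$.

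For the third claim, I would first show that $0u0$ is the lex-minimum among factors of $w$ of length $|u|+2$: any strictly smaller factor would differ from $0u0$ at some position where it has a $0$ and $u$ has a $1$, producing a shorter factor of the form $0s0 \in \Ff(w)$ with $|s| < |u|$; by minimality of $|u|$, $1s1 \notin \Ff(w)$, and a careful analysis of how this interacts with $10u0 \notin \Ff(w)$ should yield the contradiction. Once $0u0$ is extremal in $w$ and unioccurrent past its last occurrence (by finiteness), Remark~\ref{rem:extremalpref} yields extremality of every prefix of $w$ longer than $0u0$. Shorter prefixes inherit extremality by descent, since if the length-$(m+1)$ prefix of $w$ is lex-minimum among factors of length $m+1$, then the length-$m$ prefix is lex-minimum among factors of length $m$ (because every length-$m$ factor extends on the right to a length-$(m+1)$ factor in $\Ff(w)$).

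The main obstacle I expect is the cascade-and-minimality argument in the dichotomy step: rigorously showing that if $0u0$ is not a prefix (or occurs infinitely often), the iterated left-extensions $0^{k}u0$ eventually yield a shorter word $v$ with $0v0, 1v1 \in \Ff(w)$. This requires a delicate inspection of the overlap structure inside these extensions, exploiting the first and last letters of $u$ to pinpoint the right sub-factor $v$; a naive cascade alone does not obviously produce a shorter witness.
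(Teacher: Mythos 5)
Your outline matches the paper's at a high level, but the two steps you yourself flag as delicate are exactly where the argument is missing, and the missing ingredient in both cases is the same: the \emph{uniqueness} of the imbalanced pair of minimal length (a classical fact stated in \cite{BeSe}), not merely the minimality of $|u|$. For the dichotomy, your cascade does not get off the ground: knowing that an occurrence of $0u0$ at position $n\geq 1$ is preceded by $0$ only gives $00u0\in\Ff(w)$; it does \emph{not} give an occurrence of $0u0$ at position $n-1$, so you cannot iterate to $0^{k}u0$, and no shorter witness appears anyway --- the pair you produce, $(00u,1u1)$, has the \emph{same} length $|u|+2$ as the minimal one, not a smaller one. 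The paper's proof observes precisely that $(00u,1u1)$ is an imbalanced pair of the minimal length, so by uniqueness $00u=0u0$ as words; hence the occurrence of $00u$ at position $n-1$ \emph{is} an occurrence of $0u0$ there, and induction on $n$ pushes the first occurrence down to position $0$. The same identity shows that an occurrence at position $t$ forces occurrences at every position $0\leq r\leq t$, whence finiteness of the number of occurrences (else $w=0^{\omega}$). Without invoking uniqueness you have no way to close this loop, and you acknowledge as much.

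Your route to ``every prefix is extremal'' also fails as stated. First, in showing $0u0$ is lexicographically minimal among factors of length $|u|+2$, the competing factor gives $0s0\in\Ff(w)$ where $s$ is a prefix of $u$ followed by $1$ in $u$; then $1s1$ is a prefix of $1u1$ and hence \emph{is} in $\Ff(w)$ --- contrary to your assertion that $1s1\notin\Ff(w)$ --- so $(0s0,1s1)$ is a shorter imbalanced pair and the contradiction is immediate; the ``careful analysis'' you defer is not needed, but your version as written does not reach a contradiction. Second, Remark~\ref{rem:extremalpref} cannot be applied the way you intend: $0u0$ is in general not unioccurrent (it occurs at all positions $0,\dots,t$), and lex-minimality of $0u0$ does not by itself make any longer, eventually unioccurrent, prefix lex-minimal, because a competing factor could begin at one of the internal occurrences of $0u0$ and agree with the prefix through $0u0$. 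The paper handles this by induction on the position $n$ of the first factor $v'$ smaller than the corresponding prefix: either the first disagreement lies inside $0u$ (giving the shorter imbalanced pair above), or $v'$ begins with an internal occurrence of $0u0$, is therefore preceded by $0$, and the factor one position to the left is still smaller than the prefix, contradicting the minimality of $n$. Some form of this descent on positions is unavoidable and is absent from your plan.
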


\begin{proof}
Since $w$ is not balanced, there exists an imbalanced pair $(v,v')$ consisting of factors $v$ and $v'$ of $w$. It is well known (see \cite{BeSe}) that the imbalanced pair of minimal length is of the form $(0u0,1u1)$ for some factor $u$ of $w$ and is unique. If both $10u0$, $01u1$ are factors of $w$ we are done.  So let us assume that there exists a letter $x\in \{0,1\}$ such that no occurrence of $xux$ in $w$ is preceded by $1-x$. Then every internal (non-prefix) occurrence of $xux$ in $w$ is preceded by $x$. We begin by showing that  $xux$ is a prefix of $w$ from which it follows that $x$ is unique. Without loss of generality we can assume that $x=0$. Suppose that the first occurrence of $0u0$ in $w$ occurs in position $n\geq 0$. If $n=0$ we are done. So suppose $n>0$.  Then $00u$ is a factor of $w$ occurring in position $n-1$ and the pair $(00u, 1u1)$ is imbalanced. By uniqueness of the shortest imbalanced pair we have that $00u=0u0$ and hence  $0u0$ also occurs in position $n-1$, a contradiction on the minimality of $n$. This also shows that if $0u0$ occurs in position $t$ then it also occurs in each position $r$ for $0\leq r\leq t$. Thus $0u0$ occurs only finitely many times in $w$ (for otherwise $w$ would be $0^{\omega}$ and thus not binary).

We next show that every prefix of w is minimal (if we had taken $x=1$ then each prefix of $w$ would be maximal). 
We proceed by contradiction. Let $n>0$ be the least positive integer for which there exists a factor $v'$ of $w$ in position $n$ which is lexicographically smaller than the corresponding prefix $v$ of $w$ of the same length. Then either there exists a proper prefix $u'$ of $u$ such that $0u'1$ is a prefix of $v$ and $0u'0$ is a prefix of $v'$, or $v'$ begins in $0u0$. In the first case  $0u'0$ and the prefix $1u'1$ of $1u1$ constitute a shorter imbalanced pair contradicting the minimality of $|u|$. In the second case $v'$ is an internal occurrence of $0u0$ and is hence
preceded by $0$. Thus the factor $v''$ in position $n-1$ of length $|v'|$ is lexicographically smaller than $v'$ and also smaller than $v$, contradicting the minimality of $n$. 
\end{proof}

The next proposition introduces the main subject of this paper:

\begin{prop}\label{thm:NFOp}
Fix $k\geq 1$. Let  $A=\{0,1, \ldots , k\}$ be an ordered alphabet such that $0 < 1 < \cdots < k$ and $w$ an infinite word such that $\alf(w) = A$.
The following are equivalent:
\begin{enumerate}
\item For every $v,v' \in \Ff(w)$ with $v'= \suc_{w}(v)$, there exist distinct letters $a < b$ in $A$ and $ \lambda, \mu \in A^{*}$ such that
\[ \left\{  
\begin{array}{lcl} 
v &=& \lambda ab \mu \\  v' &=& \lambda ba \mu \end{array} \right.
\quad \text{OR} \quad
\left\{ \begin{array}{lcl} v &=& \lambda a \\ v' &=& \lambda b\end{array} \right. \]

\item For every $v,v' \in \Ff(w)$ with $v'= \suc_{w}(v)$, there exist $m\in A$ and $ \lambda, \mu \in A^{*}$ such that
\[ \left\{  
\begin{array}{lcl} 
v &=& \lambda m(m+1) \mu \\  v' &=& \lambda (m+1)m \mu \end{array} \right.
\quad \text{OR} \quad
\left\{ \begin{array}{lcl} v &=& \lambda m \\ v' &=& \lambda (m+1)\end{array} \right. \]

\item $A=\{0,1\}$ and  for every $v,v' \in \Ff(w)$ with $v'= \suc_{w}(v)$, there exist $ \lambda, \mu \in A^{*}$ such that
\[ \left\{  
\begin{array}{lcl} 
v &=& \lambda 01 \mu \\  v' &=& \lambda 10 \mu \end{array} \right.
\quad \text{OR} \quad
\left\{ \begin{array}{lcl} v &=& \lambda 0 \\ v' &=& \lambda 1\end{array} \right. \]
\end{enumerate}
\end{prop}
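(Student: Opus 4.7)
The implications $(3) \Rightarrow (2) \Rightarrow (1)$ are immediate, since the shapes appearing in (3) and (2) are specializations of the one in (1). The substantive content is therefore $(1) \Rightarrow (3)$. My plan is to show that (1) forces $A = \{0, 1\}$; once this is established, the distinct letters $a < b$ in (1) must lie in $\{0, 1\}$ and hence equal $0$ and $1$, matching (3) verbatim (and matching (2) with $m = 0$).

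The first step is a transition lemma: for each $i \in \{0, \ldots, k-1\}$ and every $n \geq 2$, the lex-largest length-$n$ factor of $w$ beginning with $i$ and the lex-smallest beginning with $i+1$ have the shape $i(i+1)\mu$ and $(i+1)i\mu$ for a common $\mu \in A^{n-2}$. Indeed, since $\alf(w) = A$, every letter starts some length-$n$ factor, so both extremal factors exist; and since no letter lies strictly between $i$ and $i+1$ in $A$, the two factors are lex-consecutive in $\Ff(w) \cap A^n$. Applying (1) and noting that they already differ at position $0$ rules out the ``last-letter'' form and yields the displayed shape with $\lambda = \varepsilon$. Specializing to $n = 2$ gives the crucial length-2 constraint: every $ab \in \Ff(w)$ satisfies $|a - b| \leq 1$, so consecutive letters of $w$ differ by at most one.

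Assume now for contradiction that $k \geq 2$, so $0, 1, 2 \in A$, and let $P(m)$ denote the statement $01^m 2 \notin \Ff(w)$. I prove $P(m)$ for all $m \geq 1$ by induction. For $m = 1$: if $012 \in \Ff(w)$, then by the transition lemma at $n = 3, i = 0$, also $102 \in \Ff(w)$; but $102$ contains the forbidden length-2 factor $02$. For $m \geq 2$, assuming $P(1), \ldots, P(m-1)$, I show that the lex-largest length-$(m+2)$ factor beginning with $0$ must be exactly $01^m 2$: writing it as $01\mu$ with $|\mu| = m$ and processing the coordinates $\mu[0], \mu[1], \ldots$ in order, each $\mu[j]$ with $j \leq m-2$ is forced to equal $1$ by $P(j+1)$ (which forbids the competing choice $\mu[j] = 2$), while $\mu[m-1] = 2$ by maximality using the standing assumption $01^m 2 \in \Ff(w)$. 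The transition lemma then forces $10 \cdot 1^{m-1} 2 \in \Ff(w)$, whose sub-factor at position $1$ is $01^{m-1}2$, contradicting $P(m-1)$. A symmetric induction starting from lex-minimal length-$(m+2)$ factors beginning with $2$ shows $21^m 0 \notin \Ff(w)$ for all $m \geq 1$.

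Finally, both $0$ and $2$ lie in $\alf(w)$ and hence occur in $w$, while the length-2 constraint forces consecutive letters of $w$ to change by at most one. Tracing $w$ from an occurrence of $0$ to the nearest subsequent (or preceding) occurrence of $2$ uncovers a maximal run of $1$'s in between, producing a factor of the form $01^m 2$ or $21^m 0$ with $m \geq 1$ --- contradicting the preceding paragraph. Hence $k = 1$ and $A = \{0, 1\}$, and the three conditions coincide. The main technical obstacle I anticipate is in the inductive step above: invoking the hypotheses $P(1), \ldots, P(m-1)$ in the right order to rule out every competing larger suffix and pin the lex-maximum down to $1^{m-1}2$.
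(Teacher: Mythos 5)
Your proof is correct and follows essentially the same route as the paper's: both reduce $(1)\Rightarrow(3)$ to showing $k=1$, first deriving the constraint $|a-b|\le 1$ for length-two factors $ab$ by pairing the lexicographically largest factor beginning with a given letter against the smallest factor beginning with the next letter, and then ruling out $01^m2$ and $21^m0$. The only real difference is in this last step: the paper takes the minimal $m$ with $01^m2\in\Ff(w)$ and directly lists the three forms that $\suc_w(01^m2)$ can take under condition $(1)$ (each immediately contradictory), which avoids the extra work in your induction of pinning down the lexicographic maximum of length $m+2$ beginning with $0$.
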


\begin{proof}
Clearly $(3) \Rightarrow (2) \Rightarrow (1)$. To see that $(1) \Rightarrow (3)$ it suffices to show that $(1)$ implies that $k=1$.  
We first note that if  $ab \in \Ff(w)$, then $b\in \{a-1,a,a+1\}$.  In fact suppose that $a\neq b$. Then either $a<b$ or $b<a$. We consider the first case as the latter case is essentially identical. Let $x,y\in A$ such that $ax$ is the greatest factor of length $2$  beginning with $a$ and $(a+1)y$ be the smallest factor of length 2 beginning with $(a+1)$. Clearly $(a+1)y = \suc_{w}(ax)$, which, from the hypothesis implies that $x=a+1$ and $y=a$. Thus $ab$ is lexicographically smaller or equal to $ a(a+1) $ from which it follows that $b=a+1$.

Now suppose to the contrary that $k>1$, and consider the shortest factor $v$ of $w$ containing both $0$ and $2$.
Then, from what we just proved, $v=01^n2$ or $v=21^n0$ for some $n>0$.  We will show that neither occurs in $w$. Suppose to the contrary that the first is a factor of $w$ and consider the least $n> 0$ for which $01^n2$ is a factor of $w$.  Then as $01^n2$ is not maximal,  its successor is either of the form $101^{n-1}2$ or $ 01^{n-1}21$ or $01^nx$ for some $2<x$.  The first two cases contradict the minimality of $n$ while the last case implies that $1x$ is a factor of $w$ for some $2<x$, again a contradiction. Similarly it is verified that  $v=21^n0$ is never a factor of $w$.
Hence $k=1$. \end{proof}

\begin{defin}
We say that an infinite word $w$ has the ``Nice Factors Ordering property'' (NFOp) if for $w$ one of the equivalent conditions of Proposition \ref{thm:NFOp} holds.
\end{defin}

\begin{rem}\label{rem:binary}
It is useful to stress that having the NFOp implies that the word $w$ is actually binary. Also it is easy to see that
NFOp actually characterizes the pairs of adjacent factors with respect to the lexicographic ordering, i.e., 
If $w$ satisfies NFOp and  $v$ and $v'$ are factors of $w$ with $v= \lambda 01 \mu$ and $ v'=\lambda 10 \mu$ or $v= \lambda 0$ and $v' = \lambda 1$, then $v' = \suc_{w}(v)$.
\end{rem}

\begin{lem}\label{thm:nonperiodic}
If an infinite word $w$ has the NFOp, then $w$ is aperiodic.
\end{lem}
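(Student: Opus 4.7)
The plan is to proceed by contradiction, assuming that $w$ has NFOp but is ultimately periodic. Then $p_w$ is bounded and non-decreasing, so it stabilizes: there is an integer $n_0$ with $p_w(n) = p_w(n+1)$ for every $n \geq n_0$. For such an $n$, no factor $\lambda$ of length $n$ admits both extensions $\lambda 0$ and $\lambda 1$ in $\Ff(w)$, so no two lexicographically consecutive factors of $w$ of length $n+1$ differ only in the last letter.

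Using condition~(3) of Proposition~\ref{thm:NFOp}, this forces every transition $v \to \suc_{w}(v)$ among length-$(n+1)$ factors to be of the swap type $\lambda 01\mu \to \lambda 10\mu$. Since such swaps preserve the number of $1$'s, all length-$(n+1)$ factors of $w$ have the same value of $|\cdot|_1$. If $v, v'$ denote the length-$(n+1)$ factors of $w$ starting at positions $x$ and $x+1$, the identity $|v'|_1 - |v|_1 = w_{x+n+1} - w_x$ together with $|v'|_1 = |v|_1$ yields $w_{x+n+1} = w_x$ for every $x \geq 0$; hence $w$ is (purely) periodic with period $n+1$.

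Since this holds for every $n \geq n_0$, $w$ admits in particular both $n_0 + 1$ and $n_0 + 2$ as periods; subtracting forces $w_y = w_{y+1}$ for all $y \geq n_0 + 1$, and applying the period $n_0 + 1$ once more shows that $w$ is constant, contradicting $\alf(w) = \{0, 1\}$ (which is built into NFOp via condition~(3)). The heart of the argument is the passage from stability of $p_w$ to the statement that all factors of a given length contain the same number of $1$'s; this is precisely where NFOp is essential, and the rest is elementary period arithmetic.
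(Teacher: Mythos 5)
Your proof is correct, but it takes a genuinely different route from the paper's. The paper writes $w=w'v^{\omega}$ and argues with the finitely many \emph{infinite tails} of $w$: it first asserts that lexicographically consecutive tails must themselves satisfy the swap pattern $z01z'\,/\,z10z'$ (so any two tails differ in only finitely many positions), and then exhibits two tails of the forms $(01v')^{\omega}$ and $(10v'')^{\omega}$ that differ in infinitely many positions. You instead stay entirely at the level of finite factors: boundedness of the complexity function of an ultimately periodic word gives a length $n+1$ at which no factor of length $n$ is right-special, NFOp then forces every consecutive pair of factors of length $n+1$ to be a swap $\lambda 01\mu \to \lambda 10\mu$, so all factors of that length carry the same number of $1$'s; the sliding-window identity turns this into the statement that $n+1$ is a period of $w$, and having two consecutive periods $n_0+1$ and $n_0+2$ collapses $w$ to a constant word, contradicting $\alf(w)=\{0,1\}$. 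Each step of yours checks out (in fact the period argument gives $w_y=w_{y+1}$ for all $y\geq 0$, not just $y\geq n_0+1$, though this does not matter). What your version buys is that NFOp is only ever invoked in the form in which it is actually defined, namely for finite factors, so you avoid the paper's unproved claim that the tails ``must respect the NFOp''; the price is some extra bookkeeping with the complexity function and elementary period arithmetic. Both arguments are sound.
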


\begin{proof}
Let as assume by contradiction that there exist $w', v \in A^{*}$  with $w=w'v^{\omega}$. Then $w$ has finitely many tails and it is readily proved that they must respect the NFOp, i.e.~if $x$ and $y$ are two lexicographically consecutive tails of $w$ then we can write
\[ x=z01z' \quad y=z10z'. \]
In particular, this implies that every tail contains either $01$ or $10$, hence $v$ cannot be a single letter.
As $v^{\omega}$ contains both $01$ and $10$,  $v^{\omega}$ contains  tails of the form  $(01v')^{\omega}$ and $\mu=(10v'')^{\omega}$ for some 
$v', v''$with $|v'|=|v''|=|v|-2$.  Clearly these two tails differ in an infinite number of positions. On the other hand
$w$ has only a finite number of tails and by assumption any two lexicographically consecutive tails differ in exactly two positions. Hence we obtain a contradiction. 
\end{proof}

\begin{lem}\label{thm:contradiction}
Let $w$ be an infinite word with the NFOp. Then there exists no factor $u$ in $\Ff(w)$ such that $10u0$ and $01u1$ are both factors of $w$.
\end{lem}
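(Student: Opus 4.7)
The plan is to use the NFOp to show that the number of $1$'s is a nondecreasing function along the lexicographic successor chain, and then to derive a numerical contradiction from the fact that $01u1 \llex 10u0$ while the former contains more $1$'s.

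First I would prove the following monotonicity claim: for any two factors $v, v'$ of $w$ with $|v|=|v'|$ and $v \llex v'$, one has $|v|_1 \le |v'|_1$. Since $\Ff(w)$ contains only finitely many words of any fixed length, there is a finite chain $v = f_0 \llex f_1 \llex \cdots \llex f_k = v'$ with $f_{i+1} = \suc_w(f_i)$. Condition (3) of Proposition~\ref{thm:NFOp} gives a dichotomy at each step: either $f_i = \lambda 01 \mu$ and $f_{i+1} = \lambda 10 \mu$, in which case $|f_{i+1}|_1 = |f_i|_1$, or $f_i = \lambda 0$ and $f_{i+1} = \lambda 1$, in which case $|f_{i+1}|_1 = |f_i|_1 + 1$. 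Either way $|f_i|_1 \le |f_{i+1}|_1$, and iterating yields $|v|_1 \le |v'|_1$.

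Second, I would apply this claim to the conjectured pair. Assume for contradiction that some $u \in \Ff(w)$ admits both $10u0$ and $01u1$ as factors of $w$; these are words of the same length $|u|+3$, and since $01u1$ starts with $0$ while $10u0$ starts with $1$, we have $01u1 \llex 10u0$. The monotonicity gives
\[
|u|_1 + 2 \;=\; |01u1|_1 \;\le\; |10u0|_1 \;=\; |u|_1 + 1,
\]
which is absurd. Hence no such $u$ can exist.

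I do not expect any real obstacle here: the whole content of the NFOp is exactly that lex-successors either permute $01 \mapsto 10$ or flip a final $0$ to a final $1$, and both operations are nondecreasing in the count of $1$'s, so the contradiction falls out immediately once one observes that $01u1 \llex 10u0$ goes against this direction.
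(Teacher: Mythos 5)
Your proof is correct, but it follows a genuinely different route from the paper's. The paper argues by minimal counterexample: it takes a shortest $u$ with $10u0,01u1\in\Ff(w)$, observes that these two factors differ in at least three positions and hence cannot be lexicographically consecutive, examines the successor $v$ of $01u1$ (which by NFOp must branch off at some proper prefix $\lambda$ of $u$, producing $01\lambda 1\in\Ff(w)$ while $10\lambda 0$ is a prefix of $10u0$), and thereby manufactures a strictly shorter counterexample. Your argument instead extracts a global invariant from NFOp --- that each successor step either preserves the number of $1$'s (the $\lambda 01\mu \mapsto \lambda 10\mu$ case) or increases it by one (the $\lambda 0 \mapsto \lambda 1$ case), so $v\llex v'$ forces $|v|_1\le|v'|_1$ by chaining through the finitely many factors of that length --- and then notes that $01u1\llex 10u0$ while $|01u1|_1=|10u0|_1+1$. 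Both steps are sound, and there is no circularity: your monotonicity claim uses only the NFOp hypothesis, not any of the later results. It is worth pointing out that your two ingredients are precisely the arguments the paper deploys later, in the proof of its second characterization theorem (the implications $(2)\Rightarrow(3)$ and $(3)\Rightarrow(1)$ there), so your proof effectively shows that part of that machinery could have been front-loaded to dispatch this lemma more cleanly; what the paper's local descent argument buys in exchange is a self-contained proof that stays entirely within the combinatorics of successors, without needing the counting lemma.
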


\begin{proof}
Suppose to the contrary that there exists a shortest factor $u$ such that both $10u0$ and $01u1$ are factors of $w$. Since $01u1 \llex 10u0$, but the two factors cannot be consecutive as they differ in at least three positions, the successor  $v$ of  $01u1$ satisfies $ 01u1 \llex v \llex 10u0$. It follows  that there exists a proper prefix $\lambda$ of $u$ such that $01\lambda 0$ is a prefix of $01u1$ and $01\lambda 1$ is a prefix of $v$ (notice that $v$ cannot begin with $1$ since otherwise it would be $10u1$ and thus would be lexicographically larger than $10u0$). Since $10\lambda 0$ is a prefix of $10u1$ the factors  $01\lambda 1$ and $10\lambda 0$ contradict the minimality of $|u|$. 
\end{proof}

The following result is a direct consequence of a result proved by the third author together with Jenkinson in \cite{bib:ZamJen} and, more recently, has appeared in \cite{PeRe}; we include it here with a different proof, for the sake of completeness. 

\begin{prop}\label{thm:firstDirection}
Let $w$ be a Sturmian word on the alphabet $\{ 0,1\}$. Then $w$ satisfies NFOp.
\end{prop}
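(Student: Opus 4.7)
My plan is to reduce NFOp at an arbitrary length to NFOp at a ``Christoffel length'', where Proposition~\ref{thm:christoffel} furnishes an explicit description of all factors, and to verify NFOp directly there.

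First I would establish NFOp at each length $N$ for which $w$ contains a Christoffel factor $0u1$ (hence also $1u0$, by Proposition~\ref{thm:christoffel}). The length-$N$ factors of $w$ are then precisely the $N$ conjugates of $0u1$ together with exactly one singular factor $xux$, which is extremal in $w$ (minimal if $x=0$, maximal if $x=1$). A classical property of Christoffel words says that their lexicographically sorted conjugates differ, between consecutive ones, by a single swap of an adjacent pair $01 \leftrightarrow 10$; I would verify this by a standard-morphism induction on the continued-fraction depth of the slope, or by inspecting the Burrows--Wheeler matrix of the Christoffel word. The singular factor $xux$, being extremal, sits at one end of the sorted list, differing from its Christoffel neighbour ($0u1$ if $x=0$, $1u0$ if $x=1$) only in the last letter, and thus fits the second form of NFOp. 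Since consecutive conjugates fit the first form, NFOp holds at every Christoffel length.

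Next I would transfer NFOp from a Christoffel length $N \geq n$ down to a given length $n$. Such $N$ exists because unbordered factors occur at arbitrarily large lengths in any Sturmian word (for instance, at the denominators of the convergents of the slope's continued-fraction expansion). Given lex-consecutive length-$n$ factors $f \llex f'$, let $F$ (respectively $F'$) denote the lex-greatest (respectively lex-least) length-$N$ factor whose length-$n$ prefix is $f$ (respectively $f'$). One checks that $F$ and $F'$ are themselves lex-consecutive at length $N$: any $G \in \Ff(w)$ with $|G|=N$ and $F \llex G \llex F'$ would have its length-$n$ prefix strictly between $f$ and $f'$, contradicting the hypothesis. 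Applying the previous step to $(F,F')$, either $F = \lambda 01 \mu$ and $F' = \lambda 10 \mu$, or $F = \lambda 0$ and $F' = \lambda 1$. In the first case $|\lambda|$ must satisfy $|\lambda| \leq n-1$, otherwise the length-$n$ prefixes of $F$ and $F'$ would coincide; if $|\lambda| \leq n-2$, both swap positions lie within the first $n$ coordinates, yielding $f = \lambda 01 \mu'$ and $f' = \lambda 10 \mu'$ with $\mu'$ the appropriate prefix of $\mu$ (first form of NFOp), while if $|\lambda| = n-1$ only the last prefix coordinate changes and we obtain $f = \lambda 0, f' = \lambda 1$ (second form). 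In the second case necessarily $n = N$ and NFOp is inherited directly.

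The main obstacle is justifying rigorously the combinatorial description of lex-consecutive Christoffel conjugates invoked in the first step. One approach I would pursue is an inductive argument using the standard Christoffel morphisms $L : 0 \mapsto 0,\ 1 \mapsto 01$ and $R : 0 \mapsto 10,\ 1 \mapsto 1$, showing that each preserves the single-adjacent-swap property of lex-sorted conjugate lists. An alternative, closer to the geometric perspective from the introduction, identifies each conjugate with an arc in the circle-rotation realisation of $w$: traversing the boundary between adjacent arcs simultaneously flips either two neighbouring coordinates of the code (from $01$ to $10$) or only the last coordinate, matching respectively the two forms of NFOp.
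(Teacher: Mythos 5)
Your architecture is essentially the paper's: both proofs reduce NFOp to the conjugates of a Christoffel factor $0u1$ and then transfer to arbitrary lengths by realising a lexicographically consecutive pair of short factors as prefixes of a consecutive pair at a Christoffel length. Your transfer step is correct and in fact more explicit than the paper's one-sentence version of the same reduction (just note that the prefix of a $G$ with $F\llex G\llex F'$ is only \emph{a priori} between $f$ and $f'$ inclusively; equality with $f$ or $f'$ is what your extremal choice of $F$ and $F'$ is there to exclude, so say so). Your treatment of the singular factor $xux$ as the extremal end of the sorted list, adjacent to $0u1$ or $1u0$ in the second form of NFOp, is also exactly right.

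The one substantive piece you defer --- that lexicographically consecutive conjugates of a Christoffel factor differ by a single adjacent swap $01\leftrightarrow 10$ --- is not an auxiliary ``classical property'' here: it is the entire content of the paper's proof, and the paper derives it in a few lines from the balance property alone, with no continued fractions, standard morphisms, or Burrows--Wheeler machinery. Namely: each factorisation $u=xy$ yields the two conjugates $v=y01x$ and $v'=y10x$, both in $\Ff(w)$ by Proposition~\ref{thm:christoffel}. If $z=\suc_w(v)$, then $z$ agrees with $v$ on at least the prefix $y$ (since $v\llex z\leq v'$ and $v,v'$ agree on $y$), but not on a longer prefix: otherwise $v=y01x'0\lambda$ and $z=y01x'1\mu$, and the conjugate $v'=y10x'0\lambda$ would exhibit the imbalanced pair $0x'0,\,1x'1\in\Ff(w)$. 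The symmetric argument applied to the predecessor of $v'$ then forces $z=v'$. Since $0u1$ is primitive (being unbordered), the $|u|+1$ factorisations of $u$ give $|u|+1$ distinct consecutive pairs among the $|u|+2$ conjugates, which is all of them. If you prefer to keep your own route, either of your sketches can be made rigorous (the rotation picture is essentially the argument of~\cite{bib:ZamJen}), but the balance argument above is shorter and stays entirely inside the paper's toolkit.
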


\begin{proof} Let  $0u1$ be a Christoffel factor of $w$. As $1u0$ is a conjugate of $0u1$ it follows that each factoring $u=xy$ determines two conjugates of $0u1$, namely $v=y01x$ and $v'=y10x$.
By Proposition~\ref{thm:christoffel}, $v$ and $v'$ are factors of $w$; 
let $z=\suc_{w}(v)$. As $v\llex v'$, the longest common prefix of $v$ and $z$ is at least $y$. In fact it cannot be longer, otherwise we could write $v=y01x'0\lambda$ and $z=y01x'1\mu$ for some words $x',\lambda,\mu$; as $v'=y10x'0\lambda$, we would have $0x'0,1x'1\in\Ff(w)$, a contradiction since $w$ is balanced.

Similarly, $y$ is also the longest common prefix between $v'$ and the word $z'$ such that $\suc_{w}(z')=v'$. It follows $z=v'$ and $v=z'$, i.e.,
$v$ and $v'$ are lexicographically consecutive. Thus any two consecutive conjugates of a Christoffel word in $w$ satisfy the first condition in (3) of Proposition~\ref{thm:NFOp}.
More generally, if $z$ and $z'$ are lexicographically consecutive factors of $w$, then there exists a Christoffel factor
$0u1$ and two consecutive conjugates $v$ and $v'$ of $0u1$ with $z$ a prefix of $v$ and $z'$ a prefix of $v'$.
The result now follows. \end{proof}
Before proceeding to prove our main result, we need the following:

\begin{lem}\label{thm:sturmextension}
Let $w\in \{0,1\}^\omega$ be a Sturmian word and $x\in \{0,1\}$. If $xw$ satisfies NFOp then  $xw$ is Sturmian.
\end{lem}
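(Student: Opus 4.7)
My plan is to argue by contradiction. Suppose $xw$ satisfies the NFOp but is not Sturmian. By Lemma~\ref{thm:nonperiodic} $xw$ is aperiodic, so it must be imbalanced. Proposition~\ref{thm:nonsturm} then produces a shortest $u$ with $0u0,1u1\in\Ff(xw)$, and Lemma~\ref{thm:contradiction} rules out having both $10u0$ and $01u1$ in $\Ff(xw)$. We are therefore in the second alternative of Proposition~\ref{thm:nonsturm}: there is a unique letter $y$ with $yuy$ a prefix of $xw$ occurring only finitely often, and every prefix of $xw$ is extremal of the same kind. Since $xw$ begins with $x$, necessarily $y=x$, and by the $0/1$ symmetry I may assume $x=y=0$. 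Thus every prefix of $0w$ is lex-minimal in $\Ff(0w)$, $0u0$ is the prefix, and uniform recurrence of the Sturmian $w$ forces $0u0\notin\Ff(w)$ (otherwise it would occur infinitely often), whence $1u1\in\Ff(w)$ by balance of $w$.

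The crux is a length-by-length comparison. For every $n$, a factor of $0w$ of length $n$ starting at position $\ge 1$ is already a factor of $w$, so $\Ff_n(0w)=\Ff_n(w)\cup\{0w[0\ldots n-1]\}$; once $n\ge|u|+2$ the prefix contains $0u0$ and is new. Being the lex-minimum of $\Ff_n(0w)$, its immediate successor there is the minimum of $\Ff_n(w)$, which by Remark~\ref{rem:sturm} is the length-$n$ prefix of $0\gamma$, where $\gamma$ is the characteristic of $w$. The prefix and its successor share the letter $0$ at position $0$ and differ at position $i\ge 1$ exactly when $w_{i-1}\ne\gamma_{i-1}$. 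Setting $S=\{i\ge 0:w_i\ne\gamma_i\}$, the NFOp says these two consecutive factors either differ at one position (the last) or at two adjacent positions through an $01\mapsto 10$ swap; hence $|S\cap[0,n-2]|\le 2$ for every $n\ge|u|+2$, and letting $n\to\infty$ yields $|S|\le 2$.

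It remains to eliminate the three cases. If $|S|=0$ then $w=\gamma$, so $0w=0\gamma\in\Omega_w$ and $\Ff(0w)=\Ff(w)$, contradicting $0u0\in\Ff(0w)\setminus\Ff(w)$. If $|S|=1$, the only differing position must be the last one for every large $n$, which forces the unique element of $S$ to equal $n-2$ for infinitely many $n$, absurd. If $|S|=2$, the two positions must be adjacent, $S=\{d,d+1\}$, with $w_d w_{d+1}=01$ and $\gamma_d\gamma_{d+1}=10$; then $\sigma^{d+1}(w)$ and $\sigma^{d+1}(\gamma)$ belong to $\Omega_w$ and differ only at position $0$. By Proposition~\ref{thm:sturm} the unique $\beta\in\Omega_w$ admitting both left extensions is $\gamma$, so the common tail must be $\gamma$ itself, giving $\gamma[d+2\ldots]=\gamma$ and hence the periodicity of $\gamma$, contradicting its aperiodicity.

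The main technical point I expect to be delicate is the passage from the local NFOp constraint at each length to the global bound $|S|\le 2$; this rests on the observation that the only candidate new factor of $0w$ of any length is the prefix itself, so that the NFOp at length $n$ really does compare the prefix with the prefix of $0\gamma$. Once that is in place, the rigidity of the Sturmian subshift (Proposition~\ref{thm:sturm}: the characteristic $\gamma$ is the unique bi-extendable element) cleanly handles the borderline $|S|=2$ case by forcing a periodicity that $\gamma$ cannot have.
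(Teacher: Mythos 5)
Your proof is correct and follows essentially the same route as the paper: both arguments compare the lexicographically minimal length-$n$ prefix of $0w$ with its successor in $\Ff(0w)$, namely the length-$n$ prefix of $0\gamma$, and use the NFOp to force a single adjacent $01\mapsto 10$ swap at a fixed position, concluding via Proposition~\ref{thm:sturm} that $\gamma$ is a proper tail of itself, which is impossible. Your explicit case analysis on the disagreement set $S$ is a slightly more verbose packaging of what the paper does by directly locating the swap just after the common prefix $0u$ (and your parenthetical ``$1u1\in\Ff(w)$ by balance'' should rather read that $1u1$ occurs in $0w$ at a position $\geq 1$, hence lies in $\Ff(w)$, whence $0u0\notin\Ff(w)$ by balance), but these are cosmetic differences.
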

\begin{proof}
We proceed by contradiction by supposing that $w$ is Sturmian, $xw$ satisfies NFOp and that $xw$ is not Sturmian. 
Without loss of generality we can assume that $x=0$. It follows that there exists $u$ such that both $0u0$ and $1u1$ are factors of $0w$.  Since $w$ is Sturmian, it follows from Proposition~\ref{thm:nonsturm} that $0u0$ is a unioccurrent prefix of $0w$ and every prefix of $0w$ is minimal in $0w$. On the other hand, if $\gamma$ denotes the characteristic word of $w$ (which has $u$ as a prefix), then every prefix of $0\gamma$ is minimal in $w$. By NFOp it follows that
for all $n>|u|+2$ the prefixes of length $n$ of $0w$ and $0\gamma$ can be written respectively as $0u01v_{n}$ and $0u10v_{n}$ for some word $v_{n}$. Hence
there exists a tail $v$ of $w$ such that $0w=0u01v$ and $0\gamma=0u10v$. Thus
$0v,1v\in\Omega_{w}$, so that $v=\gamma$ and hence $\gamma$ is a proper tail of itself, a contradiction.
\end{proof}

\begin{thm}\label{thm:main}
Let $w$ be an infinite word on the ordered alphabet $A=\{0,1,\ldots ,k\}$. The following statements are equivalent:
\begin{enumerate}
\item $w$ is Sturmian over the alphabet $\{0,1\}$.
\item $w$ satisfies NFOp.
\end{enumerate}
\end{thm}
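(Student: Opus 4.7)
The direction $(1)\Rightarrow(2)$ is Proposition~\ref{thm:firstDirection}. For the converse, assume $w$ satisfies NFOp. By Remark~\ref{rem:binary} the alphabet is $\{0,1\}$, and by Lemma~\ref{thm:nonperiodic} $w$ is aperiodic; it thus suffices to show that $w$ is balanced. Arguing by contradiction, suppose $w$ is imbalanced. Proposition~\ref{thm:nonsturm} produces a factor $u$ of minimal length with $0u0,1u1\in\Ff(w)$, and Lemma~\ref{thm:contradiction} rules out the first alternative of Proposition~\ref{thm:nonsturm}; the second alternative then yields a unique letter $x$ such that $xux$ is a prefix of $w$ occurring only finitely many times, and every prefix of $w$ is extremal. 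Exchanging $0$ and $1$ if necessary, take $x=0$: every prefix of $w$ is minimal, $0u0$ is a prefix of $w$, and every occurrence of $1u1$ is preceded by $1$.

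Two preliminary observations extract structure. First, the leftmost occurrence of $1u1$ lies at some position $i\geq 1$ (since $w_{0}=0$); being preceded by $1$, it gives $w_{i-1}w_{i}=11\in\Ff(w)$. Minimality of $|u|$ applied to the empty factor then forces $00\notin\Ff(w)$, so $w$ contains no factor $00$; in particular $w_{|u|+2}=1$ and $0u01$ is a prefix of $w$. Second, if $0u0$ occurred at more than one position in $w$, then (exactly as in the proof of Proposition~\ref{thm:nonsturm}) we would have $u=0^{|u|}$ and hence $00\in\Ff(w)$, a contradiction; so $0u0$ is unioccurrent in $w$, and by Remark~\ref{rem:extremalpref} every prefix of $w$ of length at least $|u|+2$ is unioccurrent and minimal.

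From this point we emulate the proof of Lemma~\ref{thm:sturmextension}. Using NFOp together with the unioccurrence of prefixes and the minimality of $|u|$, one shows by induction that for every $n>|u|+2$ the prefix $p_{n}$ of $w$ of length $n$ and its lex-successor $s_{n}:=\suc_{w}(p_{n})$ can be written as
\[ p_{n}=0u01\,v_{n},\qquad s_{n}=0u10\,v_{n}, \]
for a common suffix $v_{n}$ of length $n-|u|-3$. Taking $v=\lim v_{n}$, one obtains $w=0u01v$ and an infinite word $\gamma:=0u10v$ all of whose prefixes lie in $\Ff(w)$. Then $0v$ (a tail of $\gamma$) and $1v$ (a tail of $w$) both have all prefixes in $\Ff(w)$, and the argument of the last paragraph of the proof of Lemma~\ref{thm:sturmextension} forces $v$ to be a proper tail of itself. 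Hence $w$ is ultimately periodic, contradicting Lemma~\ref{thm:nonperiodic}.

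The main technical hurdle is the inductive step establishing the paired form of $p_{n}$ and $s_{n}$: for each $n$, one must rule out the possibilities that $s_{n}$ arises from a swap at some position other than $|u|+1,|u|+2$, or from the terminal flip. Competing swaps at positions internal to $u$ are ruled out using the minimality of $|u|$ (they would produce a shorter imbalanced pair of factors), while a swap at position $0$ (which would make $s_{n}$ start with $1$) is excluded using the unioccurrence of $p_{n}$ together with the existence of other $0$-starting factors of length $n$ obtained from the internal $0$-positions in $w$.
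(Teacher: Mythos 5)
Your reduction to showing that $w$ is balanced, and your use of Proposition~\ref{thm:nonsturm} together with Lemma~\ref{thm:contradiction} to land in the case where $0u0$ is a prefix occurring finitely often and every prefix of $w$ is minimal, matches the paper's setup. From there you diverge: the paper passes to the tail $w'$ (showing it again satisfies NFOp and is again non-Sturmian), iterates to conclude that \emph{every} tail of $w$ has all its prefixes extremal, and then uses aperiodicity to find a tail beginning in $01$ with no factor $00$ containing a tail beginning in $10$ with no factor $11$, forcing $(10)^\omega$. You instead try to rerun the interior of Lemma~\ref{thm:sturmextension} on $w$ itself, and this cannot be completed as written. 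The endgame of Lemma~\ref{thm:sturmextension} rests on Proposition~\ref{thm:sturm}: the word $\gamma$ with $0\gamma,1\gamma\in\Omega_{w}$ is \emph{unique} because $w$ is Sturmian, and that uniqueness is what forces $v=\gamma$ and hence $\gamma$ to be a proper tail of itself. In your situation $w$ is precisely not known to be Sturmian, so from ``$0v$ and $1v$ both have all prefixes in $\Ff(w)$'' nothing follows: there is no uniqueness statement to invoke and no reason whatsoever for $v$ to be a proper tail of itself. This is a missing idea, not a missing detail.

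There are also problems upstream. The claim that every occurrence of $1u1$ is preceded by $1$ does not follow from Proposition~\ref{thm:nonsturm}: its second alternative (with $x=0$) only says that $0u0$ is never preceded by $1$, and says nothing about what precedes $1u1$ (Lemma~\ref{thm:contradiction} is then automatically satisfied since $10u0\notin\Ff(w)$, so it gives no further information). Consequently your chain ``$11\in\Ff(w)$, hence $00\notin\Ff(w)$, hence $0u0$ is unioccurrent and $0u01$ is a prefix'' is unproved, and the case $u=\varepsilon$ (where $00=0u0$ \emph{is} a prefix) is silently excluded. In the inductive step, a swap internal to $u$ does not produce a \emph{shorter} imbalanced pair: it produces a factor of length $|u|+2$ which, paired with $1u1$, contradicts the \emph{uniqueness} of the minimal imbalanced pair rather than its minimality; and the cases of a swap at a position beyond $|u|+2$ and of the terminal flip are announced but never argued (they can be excluded via unioccurrence of the long prefixes, but only once that unioccurrence is actually established). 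The shortest repair is to abandon the direct emulation of Lemma~\ref{thm:sturmextension} and follow the paper's iteration over tails.
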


\begin{proof}
That $(1)\Rightarrow (2)$ follows from Proposition~\ref{thm:firstDirection}.  
To see that $(2)\Rightarrow (1)$, we suppose that $w\in \{0,1\}^\omega$ satisfies NFOp and write $w=aw'$ with $a\in\{0,1\}$. We need to show that $w$ is Sturmian. By Lemma~\ref{thm:nonperiodic} $w$ is aperiodic. If $w$ is not Sturmian, then by Lemma~\ref{thm:sturmextension} we deduce that $w'$ is not Sturmian. Also,
combining Proposition~\ref{thm:nonsturm} and Lemma~\ref{thm:contradiction} we deduce that every prefix of $w$ is extremal and hence $w'$ also satisfies NFOp. In short, if $w=aw'$ satisfies NFOp and is not Sturmian, then every prefix of $w$ is extremal and the tail $w'$ satisfies NFOp and is not Sturmian. Thus writing $w'=bw''$ we deduce that every prefix of $w'$ is extremal and $w''$ satisfies NFOp and is not Sturmian. Iterating this process indefinitely we deduce that for each tail $v$ of $w$, each prefix of $v$ is extremal in $v$. 
Since $w$ is aperiodic it follows that there exists a tail $v$ of $w$ which begins in $01$ and a tail $v'$ of $v$ which begins in $10$. Since every prefix of $v$ is minimal in $v$ and every prefix of $v'$ is maximal in $v'$ it follows that
$00$ is not a factor of $v$ and $11$ is not a factor of $v'$. Hence $v'=(10)^\omega$, a contradiction.
\end{proof}
 
We next establish another characterization of Sturmian words based on the lexicographic order of their factors.

\begin{thm}
Let $w$ be a recurrent aperiodic binary word over the alphabet $\{ 0, 1 \}$.  Then the following are equivalent:
\begin{enumerate}
\item\label{it:sturm} $w$ is Sturmian.
\item\label{it:2pos} For all factors and $v, v' \in \Ff(w)$ of equal length, if $v' = \suc_{w}(v)$ then $v$ and $v'$ differ in at most two positions.
\item\label{it:v1} For all factors and $v, v' \in \Ff(w)$ of equal length, if $v \llex v'$ then  $|v|_{1} \leq |v'|_{1}$.
\end{enumerate}
\end{thm}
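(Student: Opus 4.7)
The plan is to prove the three-way equivalence by setting up the cycle $(1)\Rightarrow(2)\Rightarrow(3)\Rightarrow(1)$, using Theorem~\ref{thm:main} to dispose of the first implication, a short counting argument for the middle one, and Proposition~\ref{thm:nonsturm} together with recurrence for the last.

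First I would dispose of $(1)\Rightarrow(2)$ essentially in one line: Sturmian words satisfy NFOp by Theorem~\ref{thm:main}, so any two lex-consecutive factors $v,v'$ of the same length have the form $(\lambda 01 \mu,\lambda 10\mu)$ or $(\lambda 0,\lambda 1)$, and in both cases they differ in at most two positions.

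For $(2)\Rightarrow(3)$, I would first reduce to the case $v'=\suc_{w}(v)$: an arbitrary pair $v\llex v'$ of equal-length factors is connected by a finite $\suc_{w}$-chain, so additivity of $|\cdot|_1$ reduces the claim to a single step. For one such step, let $p$ be the first position where $v$ and $v'$ disagree; by definition of $\llex$, $v$ has a $0$ and $v'$ has a $1$ at position $p$. By hypothesis~(2) there is at most one other discrepancy, and whether that second position contributes $(0,1)$ or $(1,0)$ to $v'-v$, the net change in the count of $1$'s is $+1$, $0$, or $+2$, all of which are nonnegative. This step I expect to be routine.

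The main work, and the only place where recurrence is used, is $(3)\Rightarrow(1)$. I would argue by contradiction: if $w$ were not Sturmian then, being aperiodic, it would have to be unbalanced, and Proposition~\ref{thm:nonsturm} would hand me a minimal $u$ with $0u0,1u1\in\Ff(w)$. The proposition's second alternative would require a prefix $xux$ occurring only finitely many times in $w$, which is forbidden by recurrence; hence $10u0$ and $01u1$ must both be factors of $w$. But then $01u1\llex 10u0$ while $|01u1|_1=|u|_1+2>|u|_1+1=|10u0|_1$, directly contradicting~(3). The delicate point, and the only real obstacle I foresee, is making sure recurrence is correctly invoked to rule out the ``prefix'' branch of Proposition~\ref{thm:nonsturm}; once that is in hand, the lex/letter-count discrepancy between $01u1$ and $10u0$ closes the argument immediately.
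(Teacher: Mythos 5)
Your proposal is correct and follows essentially the same route as the paper: $(1)\Rightarrow(2)$ via Theorem~\ref{thm:main} and the NFOp, $(2)\Rightarrow(3)$ by reducing to a single $\suc_w$-step and counting the change in $|\cdot|_1$, and $(3)\Rightarrow(1)$ by using recurrence to force the first alternative of Proposition~\ref{thm:nonsturm} and then observing that $01u1 \llex 10u0$ with $|01u1|_1 = |10u0|_1 + 1$. All three steps, including the invocation of recurrence to exclude the finitely-occurring prefix $xux$, match the paper's argument.
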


\begin{proof}
$(\ref{it:sturm}) \Rightarrow (\ref{it:2pos})$: Since $w$ is Sturmian, it has the NFOp by Theorem \ref{thm:main}. The statement is clearly proven since the NFOp trivially implies condition~(\ref{it:2pos}) by definition.

%

$(\ref{it:2pos}) \Rightarrow (\ref{it:v1})$: Notice that
condition~(\ref{it:2pos}) implies that if $f' = \suc_{w}(f)$, then there must exist $\lambda, \mu, \mu', x, x'$ with $|x|=|x'|\leq 1$ such that $f=\lambda 0\mu x\mu'$ and $f'=\lambda 1\mu x'\mu'$. Hence
\[
|f|_{1}=|\lambda|_{1}+|\mu|_{1}+|\mu'|_{1}+|x|_{1} \leq |\lambda|_{1}+|\mu|_{1}+|\mu'|_{1}+ 1 \leq 
 |\lambda|_{1}+|\mu|_{1}+|\mu'|_{1}+ |x'|_{1}+1 = |f'|_{1}.
\]
And thus, in particular $|f|_{1} \leq |f'|_{1}$.
Suppose $v \llex v'$. Then there must exist $v_{0}, \ldots , v_{k}$ such that $v=v_{0}$, $v'=v_{k}$ and for all $1 \leq n\leq k$, $v_{n} = \suc_{w}(v_{n-1})$, then 
\[ |v|_{1}=|v_{0}|_{1}\leq \cdots \leq |v_{k}|_{1}=|v'|_{1}. \]

$(\ref{it:v1}) \Rightarrow (\ref{it:sturm})$: Assume $w$ is not Sturmian; as $w$ is aperiodic, it has to be imbalanced. Since $w$ is recurrent, we have from Proposition \ref{thm:nonsturm} that there must exist $u$ such that both $10u0$ and $01u1$ are factors of $w$. But clearly this is a contradiction, since $01u1 \llex 10u0$ and  $|01u1|_{1}=|10u0|_{1}+1$. This concludes the proof. 
\end{proof}

Notice that, as opposed to Theorem \ref{thm:main}, the above result actually needs the recurrence and aperiodicity hypotheses, as for example:
\begin{itemize}
\item the recurrent periodic word $(01)^{\omega}$ and the non-recurrent aperiodic word $00f$ (where $f$ is the Fibonacci word) both respect condition~(\ref{it:v1}), although neither is Sturmian,
\item the non-recurrent ultimately periodic word $01^{\omega}$ satisfies both~(\ref{it:2pos}) and~(\ref{it:v1}), but it is not Sturmian.
\end{itemize}

\bibliographystyle{fundam}

\end{document}